\newtheorem{theo}{Theorem}
\newtheorem{coro}{Corollary}
\newtheorem{lemm}{Lemma}
\theoremstyle{remark}
\newtheorem{rema}{\bf Remark}
\begin{document}

\title{Automorphism groups of dessins d'enfants}
\author{Rub\'en A. Hidalgo}

\address{Departamento de Matem\'atica y Estad\'{\i}stica, Universidad de La Frontera.  Temuco, Chile}
\email{ruben.hidalgo@ufrontera.cl}

\thanks{Partially supported by Projects Fondecyt 1150003 and Anillo ACT 1415 PIA-CONICYT}
\subjclass[2010]{Primary 30F40, 11G32, 14H57}
\keywords{Riemann surfaces, dessins d'enfants, automorphisms}

\maketitle

\begin{abstract}
Recently, Gareth Jones observed that every finite group $G$ can be realized as the group of automorphisms of some dessin d'enfant ${\mathcal D}$. In this paper, complementing  Gareth's result, we prove that for every possible action of  $G$ as a group of orientation-preserving homeomorphisms on a closed orientable surface of genus $g \geq 2$, there is a dessin d'enfant ${\mathcal D}$ admitting $G$ as its group of automorphisms and realizing the given topological action.
 In particular, this asserts that the strong symmetric genus of $G$ is also the minimum genus action for it to acts as the group of automorphisms of a dessin d'enfant of genus at least two.
\end{abstract}

\section{Introduction}
In \cite{Hurwitz}, Hurwitz showed that every finite group $G$ can be realized as a group of conformal automorphisms of some closed Riemann surface of genus $g \geq 2$, and that the upper bound $|G| \leq 84(g-1)$ holds. This upper bound is attained if and only if $G$ can be generated by two elements $a,b$ such that $a^{2}=b^{3}=(ab)^{7}=1$. In \cite[Thm. 4]{Greenberg2}, Greenberg observed that the Riemann surface can be chosen to have $G$ as its full group of conformal automorphisms. In this paper we study the realization of $G$ as the group of automorphisms of some dessin d'enfant.

A {\it dessin d'enfant} (as introduced by Grothendieck in his Esquisse \cite{Gro}) is a pair ${\mathcal D}=(X,{\mathcal G})$, where ${\mathcal G}$ is a bipartite graph (its vertices are either black or white and adjacent ones have different colour) embedded in a closed orientable surface $X$ and providing a $2$-cell decomposition of it, that is, each of the connected components of $X\setminus {\mathcal G}$, its faces, is a topological disc. The degree of a face is the number of its boundary edges.  If all its black vertices (respectively, all its white vertices and all its faces) have the same degree, then ${\mathcal D}$ is called {\it uniform}. An automorphism of ${\mathcal D}$ is a graph automorphism of the bipartite graph ${\mathcal G}$ (that is, it sends white vertices to white vertices)
induced by an orientation-preserving self-homeomorphism of $X$. The group of automorphisms of ${\mathcal D}$ is denoted by ${\rm Aut}({\mathcal D})$. 
If  ${\rm Aut}({\mathcal D})$ acts transitively on the edges of ${\mathcal G}$, then ${\mathcal D}$ is called {\it regular}, in which case it is necessarily uniform (there are examples of non-regular uniform dessins d'enfants). Generalities on dessin d'enfants can be found, for instance, in \cite{GG, Gro, J-S, JW, LZ}.

It is well known fact that a finite group is isomorphic to ${\rm Aut}({\mathcal D})$, for some regular dessin d'enfant ${\mathcal D}$, if and only if it is $2$-generated. So, if a finite group cannot be generated by two elements, then it cannot be realized as the group of automorphisms of a regular dessin d'enfant, but it appears as the group of automorphisms of a non-regular one as it was observed by G. Jones.

\begin{theo}[G. Jones \cite{Jones}]\label{main0}
Every finite group is isomorphic to ${\rm Aut}({\mathcal D})$ for a suitable dessin d'enfant ${\mathcal D}$. 
\end{theo}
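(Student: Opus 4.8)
The plan is to realise $G$ as the orientation-preserving automorphism group of a \emph{clean} dessin, that is, one in which every white vertex has degree $2$. Such a dessin is nothing but a finite connected graph $Z$ together with a rotation system $\rho$ (a cyclic ordering of the half-edges around each vertex): the black vertices are the vertices of $Z$, the white vertices sit at the midpoints of its edges, and an orientation-preserving self-homeomorphism of the underlying surface induces precisely a graph automorphism of $Z$ preserving $\rho$. The point of working in this setting is that every automorphism of a dessin is, in particular, a colour-preserving automorphism of its bipartite graph. Hence if we manage to build $\mathcal D$ on top of a graph whose full automorphism group is already $G$, the inclusion $\mathrm{Aut}(\mathcal D)\le G$ comes for free, and the only remaining task is to make sure that $G$ does survive as a group of \emph{orientation-preserving} symmetries of $(Z,\rho)$.

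For the first ingredient I would invoke a mild strengthening of Frucht's theorem: there is a finite connected graph $Z$ with $\mathrm{Aut}(Z)\cong G$ on which $G$ moreover acts \emph{freely on the vertex set}. This comes out of the usual proof of Frucht's theorem. Start from a Cayley digraph of $G$ with respect to some generating set $S$; its vertices (the ``hubs'') carry the simply transitive left-regular action of $G$. Now replace each arc by a rigid asymmetric gadget, using pairwise non-isomorphic gadgets for distinct elements of $S$ and giving each gadget an orientation that distinguishes its two ends. The resulting graph $Z$ has automorphism group exactly the left-regular copy of $G$; a nontrivial element of it moves every hub, and by rigidity and directedness of the gadgets it therefore moves every gadget-internal vertex as well, so the $G$-action on $V(Z)$ is free. (For $G$ trivial or cyclic of order $2$ one simply takes a point or a single edge.)

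Next I would put a rotation system on $Z$: choose an arbitrary cyclic ordering of the half-edges at one vertex from each $G$-orbit and transport it around by $G$. Since $G$ acts freely on $V(Z)$ there is no local compatibility obstruction, so this produces a genuinely $G$-invariant rotation system $\rho$, and hence a clean dessin $\mathcal D=\mathcal D(Z,\rho)$ embedded in its canonical closed orientable surface $X$ (any rotation system on a connected graph yields a $2$-cell embedding in an orientable surface, of some genus which is irrelevant here). Because $G$ preserves $\rho$, every element of $G$ is realised by an orientation-preserving self-homeomorphism of $X$ carrying $\mathcal G$ to $\mathcal G$, so $G\le\mathrm{Aut}(\mathcal D)$; conversely $\mathrm{Aut}(\mathcal D)$ embeds in the group of colour-preserving automorphisms of the bipartite graph underlying $\mathcal D$, which is isomorphic to $\mathrm{Aut}(Z)\cong G$ (subdividing edges does not change the colour-preserving automorphism group). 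Therefore $\mathrm{Aut}(\mathcal D)\cong G$, as desired.

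The only genuinely delicate point is the Frucht-type step: manufacturing a graph whose \emph{entire} automorphism group is exactly $G$ while keeping the $G$-action on vertices free. All the rigidity needed for the theorem is concentrated there; once that graph is available, passing to a $G$-equivariant rotation system can only shrink the symmetry group, and freeness of the action is exactly what keeps it from shrinking below $G$. (One could instead argue in the free group $F_2=\langle x,y\rangle$, where dessins of degree $n$ correspond to index-$n$ subgroups $\Gamma$ up to conjugacy and $\mathrm{Aut}(\mathcal D)\cong N_{F_2}(\Gamma)/\Gamma$; the problem then becomes that of finding a finite $2$-generated group $M$ — the monodromy group — with a subgroup $L$ such that $N_M(L)/L\cong G$. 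But the graph-theoretic formulation above seems cleaner and makes the role of Frucht's theorem transparent.)
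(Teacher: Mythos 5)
Your argument is essentially correct, but the first thing to note is that the paper does not actually prove Theorem \ref{main0}: it is quoted from Jones's preprint \cite{Jones}, and the paper's own work goes into the refinements, Theorems \ref{main1} and \ref{main2}. The method there is genuinely different from yours. Starting from a prescribed conformal action of $G$ on $S$, the paper passes to the quotient orbifold $S/G$, equips the underlying surface with a \emph{non-uniform} Belyi map $\delta$ of \emph{prime} degree (Lemma \ref{lema1}, with enough preimages of $1$ to absorb the cone points), and sets $\beta=\delta\circ\eta$ where $\eta$ is the regular (branched) cover with deck group $G$; if ${\rm Aut}(\widehat{S},\beta)$ were strictly larger than $G$, primality of $\deg\delta$ would force $\beta$ to be a regular, hence uniform, Belyi map, contradicting the non-uniformity inherited from $\delta$. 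That argument buys exactly what the paper is after --- control of the genus and of the topological conjugacy class of the action --- which your construction does not attempt: the genus of the Edmonds embedding of a Frucht graph is whatever the rotation system dictates. In exchange, your route (a connected graph $Z$ with ${\rm Aut}(Z)\cong G$ acting freely on vertices, a $G$-equivariant rotation system, and the identification of automorphisms of the resulting clean dessin with rotation-preserving graph automorphisms) is more elementary and purely combinatorial, with all the rigidity concentrated in the Frucht-type lemma, as you say. Two small points to tighten: you should insist that $Z$ be simple (so that colour-preserving automorphisms of the edge-subdivision are exactly ${\rm Aut}(Z)$, with no extra freedom to permute parallel edges), and for the trivial group a single point is not a dessin --- take instead the degree-one dessin consisting of one edge on the sphere. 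Neither is a real gap, and the free action on vertices does follow from the standard gadget construction since every nontrivial automorphism displaces every hub and hence every gadget.
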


As a consequence of the uniformization theorem, a dessin d'enfant ${\mathcal D}=(X,{\mathcal G})$ defines a 
closed Riemann surface $S$ (of the same genus as $X$ and called a {\it Belyi curve}) and a non-constant meromorphic map
$\beta:S \to \widehat{\mathbb C}$ whose branch values are contained in the set $\{0,1,\infty\}$ (called a {\it Belyi map}) such that $\beta^{-1}([0,1])$ is isomorphic to ${\mathcal G}$ (the pair $(S,\beta)$ is called a {\it Belyi pair}). Conversely, every Belyi pair $(S,\beta)$ defines a dessin d'enfant ${\mathcal D}=(X,{\mathcal G})$, where $X$ is the topological surface underlying $S$ and ${\mathcal G}= \beta^{-1}([0,1])$. This provides an equivalence between the categories of dessins d'enfants and that of Belyi pairs.
In this equivalence, the group ${\rm Aut}({\mathcal D})$ corresponds to the group ${\rm Aut}(S,\beta)$ of automorphisms of the Belyi pair, consisting of those conformal automorphisms $\phi$ of $S$ satisfying that $\beta=\beta \circ \phi$. In this seeting, Theorem \ref{main0} can be written as follows.

\begin{theo}[G. Jones \cite{Jones}]
Every finite group is isomorphic to ${\rm Aut}(S,\beta)$ for a suitable Belyi pair $(S,\beta)$. 
\end{theo}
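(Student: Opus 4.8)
The plan is to obtain this statement as an immediate consequence of Theorem~\ref{main0} by transporting everything through the equivalence of categories between dessins d'enfants and Belyi pairs recalled just above. Given a finite group $G$, Theorem~\ref{main0} produces a dessin d'enfant ${\mathcal D}=(X,{\mathcal G})$ with ${\rm Aut}({\mathcal D})\cong G$. Uniformization then attaches to ${\mathcal D}$ a Belyi pair $(S,\beta)$, with $S$ a closed Riemann surface homeomorphic to $X$ and $\beta:S\to\widehat{\mathbb C}$ a Belyi map satisfying $\beta^{-1}([0,1])\cong{\mathcal G}$. Under this correspondence the group ${\rm Aut}({\mathcal D})$ is identified with ${\rm Aut}(S,\beta)$, the group of conformal automorphisms $\phi$ of $S$ with $\beta=\beta\circ\phi$; hence ${\rm Aut}(S,\beta)\cong G$, which is exactly the assertion. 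So, granting the dictionary already set up in the text, there is essentially nothing left to do.

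The single point that carries the weight of the argument — and the one I would flag as the heart of the matter — is the identification ${\rm Aut}({\mathcal D})\cong{\rm Aut}(S,\beta)$, i.e. that passing from the combinatorial/topological picture to the holomorphic one neither creates nor destroys symmetries. Here one uses that the conformal structure on $S$ is the \emph{canonical} one determined by ${\mathcal D}$ (equivalently, the structure pulled back from $\widehat{\mathbb C}$ through $\beta$, or the one coming from the Fuchsian uniformization associated with the triangle-group monodromy of ${\mathcal D}$): any orientation-preserving self-homeomorphism of $X$ inducing a colour-preserving automorphism of ${\mathcal G}$ permutes the cells of the dessin and is therefore isotopic to one that is conformal for this canonical structure, hence represented by an element of ${\rm Aut}(S,\beta)$; conversely, any conformal $\phi$ with $\beta\circ\phi=\beta$ preserves $\beta^{-1}([0,1])={\mathcal G}$ together with its two-colouring, so it induces an automorphism of ${\mathcal D}$. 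With this correspondence of automorphism groups in hand, the theorem is proved.

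Alternatively, and closer in spirit to Jones' original route, one could avoid the reduction and build $(S,\beta)$ directly from group-theoretic data: realize $G$ as $N_{M}(H)/H$ for a suitable two-generated group $M=\langle\sigma_{0},\sigma_{1}\rangle$ and a subgroup $H\le M$, so that $G$ is the automorphism group of the dessin carried by the monodromy action of $M$ on the cosets $M/H$, and then let $(S,\beta)$ be the Belyi pair with that monodromy. A Frucht-type construction supplies such $M$ and $H$. On this route the genuine obstacle is precisely the group-theoretic step — exhibiting $M$ two-generated and $H\le M$ with $N_{M}(H)/H\cong G$ — but since Theorem~\ref{main0} already packages exactly that fact, the categorical reduction above is the shortest complete path and the one I would write up.
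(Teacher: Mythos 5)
Your proposal is correct and matches the paper exactly: the text introduces this statement with ``In this setting, Theorem~\ref{main0} can be written as follows,'' i.e.\ it is obtained precisely by transporting Theorem~\ref{main0} through the equivalence between dessins d'enfants and Belyi pairs, under which ${\rm Aut}({\mathcal D})$ corresponds to ${\rm Aut}(S,\beta)$. Your identification of that correspondence as the only point carrying weight is also the right emphasis.
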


Let $G_{j}$, for $j=1,2$, be a finite group of orientation-preserving sel-homeomorphisms of a closed orientable surface $X_{j}$ such that $G_{1} \cong G_{2}$ (isomorphic groups). We say that the actions of $G_{1}$ and $G_{2}$ are {\it topologically equivalent} if there is an orientation-preserving homeomorphism $F:X_{1} \to X_{2}$ such that $F G_{1} F^{-1}=G_{2}$. A consequence of uniformization theorem is that every finite group of orientation-preserving self-homeomorphisms of a closed orientable surface $X$ is topologically equivalent to the action of an isomorphic group of conformal automorphisms of a closed Riemann surface (see also \cite{Kerckhoff}).

In this paper we observe the following extension to the above Jones' realization theorem.

\begin{theo}\label{main1}
Let $G$ be a finite group of conformal automorphisms of  a closed Riemann surface $S$ of genus $g \geq 2$.
Then there is a Belyi pair $(\widehat{S},\beta)$, where $\widehat{S}$ has genus $g$, $G \cong {\rm Aut}(\widehat{S},\beta)$ and the actions of ${\rm Aut}(\widehat{S},\beta)$ and $G$ are topologically equivalent. 
\end{theo}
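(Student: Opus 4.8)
The plan is to realise the quotient orbifold $S/G$ as the base of a sufficiently rigid Belyi map and then to pull that map back along the quotient map. Write $\pi\colon S\to R:=S/G$; this is a regular branched covering with deck group $G$, branched over a finite set $B=\{q_{1},\ldots,q_{n}\}\subset R$ with $q_{i}$ of branch order $m_{i}$, and $R$ has some genus $h$. Since Theorem~\ref{main1} asks only that we reproduce the $G$-action up to topological equivalence, and since (as recalled before the statement) that conformal action is determined up to topological equivalence by the topological type of $(R,B,(m_{i}))$ together with the monodromy homomorphism $\omega\colon\pi_{1}(R\setminus B)\twoheadrightarrow G$, it suffices to exhibit: a Riemann surface $\widehat{R}$ homeomorphic to $R$; a Belyi map $f\colon\widehat{R}\to\widehat{\mathbb C}$ whose set of special points $P:=f^{-1}(\{0,1,\infty\})$ contains the $n$ points corresponding to $q_{1},\ldots,q_{n}$; and a regular $G$-covering $\widehat{\pi}\colon\widehat{S}\to\widehat{R}$ with the same branch data as $\pi$. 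The desired Belyi pair is then $(\widehat{S},\beta)$ with $\beta:=f\circ\widehat{\pi}$.

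First I would fix $f$. Choose $\widehat{R}$ of genus $h$ and a Belyi map $f\colon\widehat{R}\to\widehat{\mathbb C}$ with three features: (i) the $n$ distinguished points lie in $P$, say as unramified points of $f$ over $0$; (ii) $f$ is \emph{indecomposable}, i.e. its monodromy group acts primitively on a generic fibre; (iii) $\deg f>84(g-1)$. Such an $f$ exists by the Riemann existence theorem: for $d:=\deg f$ large one prescribes three permutations in the symmetric group $S_{d}$ with product the identity, generating a primitive subgroup --- for instance one containing the alternating group $A_{d}$, which already gives (ii) once $d\ge5$ --- whose cycle structure realises the genus $h$ through Riemann--Hurwitz while leaving at least $n$ fixed letters in the factor over $0$; and $d$ may be taken as large as desired. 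Next, build $\widehat{\pi}$: filling in the \emph{auxiliary} points of $P$ (those other than the $n$ distinguished ones) induces a surjection $\pi_{1}(\widehat{R}\setminus P)\twoheadrightarrow\pi_{1}(R\setminus B)$, and post-composing with $\omega$ gives a surjection $\rho\colon\pi_{1}(\widehat{R}\setminus P)\twoheadrightarrow G$. Since $\rho$ kills every loop around an auxiliary point, the associated regular $G$-covering of $\widehat{R}\setminus P$ extends to a regular $G$-covering $\widehat{\pi}\colon\widehat{S}\to\widehat{R}$ branched \emph{exactly} over the $n$ distinguished points with the same orders $m_{i}$; by Riemann--Hurwitz $\widehat{S}$ again has genus $g$, and since $\rho$ factors through $\omega$ the $G$-action on $\widehat{S}$ is topologically equivalent to the given one. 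Finally set $\beta:=f\circ\widehat{\pi}$: a branch point of $\beta$ is either a branch point of $\widehat{\pi}$, hence lying over a distinguished point (which is in $P$), or it lies over a branch point of the Belyi map $f$; either way its image belongs to $\{0,1,\infty\}$, so $(\widehat{S},\beta)$ is a Belyi pair with $\widehat{S}$ of genus $g$. As every deck transformation of $\widehat{\pi}$ preserves $\beta=f\circ\widehat{\pi}$, we get $G\subseteq{\rm Aut}(\widehat{S},\beta)$.

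The main point is the reverse inclusion, where (ii) and (iii) are used. Set $H:={\rm Aut}(\widehat{S},\beta)$, a finite group of conformal automorphisms of $\widehat{S}$ (finite since $g\ge2$), and assume $G\subsetneq H$. Being $H$-invariant, $\beta$ factors as $\beta=\overline{\beta}\circ q$ with $q\colon\widehat{S}\to Y:=\widehat{S}/H$ the quotient map and $\overline{\beta}\colon Y\to\widehat{\mathbb C}$ holomorphic; since $G\le H$, the map $q$ factors through $\widehat{\pi}$, say $q=q'\circ\widehat{\pi}$ with $q'\colon\widehat{R}\to Y$ of degree $[H:G]\ge2$ (the number of $G$-orbits in a generic $H$-orbit), so that $f=\overline{\beta}\circ q'$ (using that $\widehat{\pi}$ is surjective). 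By (ii) this factorisation of $f$ must be trivial on the left, i.e. $\overline{\beta}$ is an isomorphism; hence $Y\cong\widehat{\mathbb C}$ and, under this identification, $q$ is a Belyi map, so $\widehat{S}\to\widehat{S}/H$ is a regular branched covering of the sphere with at most three branch values. Hurwitz's classical bound then forces $|H|\le84(g-1)$, whence $\deg f=\deg q'=[H:G]\le84(g-1)$, contradicting (iii). Therefore $H=G$.

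The genuine difficulty, I expect, is the balancing act in (i)--(iii): $f$ must be flexible enough to carry the prescribed branch points of $\widehat{\pi}$ inside its set of special points, yet rigid enough --- indecomposable and of large degree --- that $f\circ\widehat{\pi}$ cannot pick up extra automorphisms; once the factorisation $f=\overline{\beta}\circ q'$ is extracted, the Hurwitz bound closes the argument immediately. Finally, the statement about the strong symmetric genus follows from Theorem~\ref{main1} together with the fact, recalled above, that every action of $G$ by orientation-preserving homeomorphisms on a closed orientable surface is topologically equivalent to a conformal one: the set of integers $g\ge2$ for which $G$ is the automorphism group of some dessin d'enfant of genus $g$ then coincides with the set of integers $g\ge2$ for which $G$ acts faithfully by orientation-preserving homeomorphisms on a surface of genus $g$, and the minimum of the latter set --- when it is at least $2$ --- is by definition the strong symmetric genus of $G$. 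In particular, since every finite group acts faithfully as conformal automorphisms on some closed Riemann surface of genus $\ge2$, this reproves Jones' Theorem~\ref{main0}.
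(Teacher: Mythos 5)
Your proof is correct, but both the construction of the base Belyi map and the final contradiction differ from the paper's. The overall architecture is shared: place a Belyi map on a surface homeomorphic to $S/G$ whose special fibre absorbs the branch locus of the quotient map, compose, and then rule out extra automorphisms by factoring the base map through $\widehat{S}/{\rm Aut}(\widehat{S},\beta)$. The paper builds its base map $\delta$ combinatorially (Lemma~\ref{lema1}): a triangulation-based dessin of \emph{prime} degree, deliberately made \emph{non-uniform}, with the cone points placed at white vertices in $\delta^{-1}(1)$; the $G$-action is transported by a quasiconformal deformation of the uniformizing Fuchsian groups. Its endgame is: prime degree forces $\widehat{S}/\widetilde{G}\cong\widehat{\mathbb C}$, so the dessin would be regular, hence uniform, contradicting the non-uniformity inherited from $\delta$. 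You instead take $f$ \emph{indecomposable} (which subsumes the prime-degree device, since a cover of prime degree is automatically indecomposable) and of degree exceeding $84(g-1)$, transport the action by elementary covering-space theory, and close with the Hurwitz bound: $\deg f=[H:G]\le |H|\le 84(g-1)$. Your endgame is arguably cleaner --- it avoids the uniformity bookkeeping, which is slightly delicate in the paper because $\eta$ \emph{is} branched at white vertices of $\delta$ --- at the price of a large-degree base map and of invoking the Riemann existence theorem with prescribed primitive monodromy and enough fixed letters over $0$ (standard, but the one step you should spell out; taking the degree prime would even let you replace ``contains $A_d$'' by the automatic primitivity of transitive groups of prime degree). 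One limitation worth noting: the Hurwitz bound is unavailable on non-compact surfaces, so the paper's uniformity argument, unlike yours, carries over verbatim to the finitely generated case of Theorem~\ref{main2}.
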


In terms of dessins d'enfants, the above can be written as follows.

\begin{theo}\label{main1-1}
Let $G$ be a finite group of conformal automorphisms of  a closed Riemann surface $S$ of genus $g \geq 2$.
Then there is a dessin d'enfant ${\mathcal D}=(X,{\mathcal G})$, where $X$ has genus $g$, $G \cong {\rm Aut}({\mathcal D})$ and the actions of ${\rm Aut}({\mathcal D})$ and $G$ are topologically equivalent. 
\end{theo}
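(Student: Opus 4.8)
The plan is to establish the equivalent statement of Theorem~\ref{main1-1}, working directly with dessins and using the Belyi correspondence recalled above only as a bookkeeping device. Since the conclusion depends only on the underlying oriented topological surface and on the $G$--action on it, I fix once and for all the closed oriented surface $X$ underlying $S$ together with the given orientation--preserving action of $G$ on $X$, and reduce to the following: \emph{construct a $G$--invariant dessin $\mathcal D=(X,\mathcal G)$ with ${\rm Aut}(\mathcal D)=G$} (the non--trivial point being equality, since the inclusion $G\le{\rm Aut}(\mathcal D)$ comes for free from $G$--invariance). Any such $\mathcal D$ has underlying surface of genus $g$ and realises the prescribed topological action, and $g\ge 2$ keeps all automorphism groups finite.

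First I would manufacture a $G$--invariant dessin on $X$ by pulling one back from the quotient. Let $\pi\colon X\to Y:=X/G$ be the quotient branched covering; $Y$ is a closed orbifold whose underlying surface has some genus $\gamma$ and whose cone points $q_1,\dots,q_r$, of orders $m_1,\dots,m_r$, record the branch data of $\pi$. I would then choose a connected dessin $\overline{\mathcal D}=(Y,\overline{\mathcal G})$ on $Y$ such that: (i) each $q_i$ is a black vertex of $\overline{\mathcal D}$; (ii) $\overline{\mathcal D}$ has a \emph{prime} number $p$ of edges; (iii) $\overline{\mathcal D}$ has two black vertices, neither of them a cone point, of distinct degrees. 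This is elementary: begin with any dessin on the genus--$\gamma$ surface having $q_1,\dots,q_r$ among its black vertices, then repeatedly attach pendant edges inside faces — an operation preserving the genus and the cell--decomposition property that lets one push the number of edges up to a prime and create black vertices of prescribed small degrees so as to arrange (iii). Now put $\mathcal D:=\pi^{*}\overline{\mathcal D}$. Since the branch locus of $\pi$ meets $\overline{\mathcal G}$ only at (black) vertices, $\mathcal D$ is again a dessin; it is connected (as $X$ is), $G$--invariant (as $\pi$ is), has underlying surface $X$, and so $G\le{\rm Aut}(\mathcal D)$ acting as prescribed. Moreover $\mathcal D$ is \emph{not uniform}: the two black vertices from (iii) lie off the branch locus, hence lift to black vertices of $\mathcal D$ of the same two distinct degrees.

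The heart of the proof is ruling out extra automorphisms. Set $A:={\rm Aut}(\mathcal D)\supseteq G$ and pass to the Belyi side: let $\beta\colon \widetilde S\to\widehat{\mathbb C}$ be the Belyi map attached to $\mathcal D$, so that $A$ acts on $\widetilde S$ by conformal automorphisms with $\beta\circ a=\beta$. Being $G$--invariant, $\beta$ factors as $\widetilde S\to \widetilde S/G\to\widehat{\mathbb C}$, the second arrow being the Belyi map of the quotient dessin $\mathcal D/G$; and $\mathcal D/G=\overline{\mathcal D}$ by construction, so that Belyi map has degree $p$. Since $G\le A$, the quotient $\widetilde S/G\to \widetilde S/A$ is holomorphic of degree $[A:G]$ and the Belyi map of $\overline{\mathcal D}$ factors through it, the complementary factor being the Belyi map of $\mathcal D/A$. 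Comparing degrees, and recalling that the automorphism group of a connected dessin acts freely on its edge set (so $\mathcal D/A$ has exactly $|E(\mathcal D)|/|A|$ edges), one gets $p=\big(|E(\mathcal D)|/|A|\big)\cdot[A:G]$. As $p$ is prime there are only two options: either $[A:G]=1$, whence $A=G$ and we are done; or $[A:G]=p$ and $|E(\mathcal D)|=|A|$, i.e.\ $A$ acts transitively on the edges of $\mathcal D$, which would make $\mathcal D$ a regular — hence uniform — dessin, contradicting the last sentence of the previous paragraph. Therefore $A=G$. (The Belyi--pair formulation, Theorem~\ref{main1}, follows from the same construction carried out on a Belyi curve realising $\overline{\mathcal D}$, with $\widehat S$ taken to be the $G$--cover determined by the epimorphism onto $G$ of the relevant Fuchsian group coming from the action of $G$ on $S$, two Fuchsian groups of equal signature being abstractly isomorphic.)

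The main obstacle is precisely the step just carried out: an element of ${\rm Aut}(\mathcal D)$ need not normalise $G$, so it cannot simply be pushed down to $Y$ and the naive quotient argument does not apply. The device that circumvents this is to read the inclusion $G\le A$ through the tower of \emph{Belyi} quotients $\widetilde S\to \widetilde S/G\to \widetilde S/A$ and to block the only two numerically possible values $1$ and $p$ of $[A:G]$ — by making $\overline{\mathcal D}$ have a prime number of edges, and by forcing $\mathcal D$ to be non--uniform. Everything else is soft. Since the genus produced equals that of the given action, this also yields the statement of the abstract: the strong symmetric genus of $G$ is the least genus $\ge 2$ of an action of $G$ as the full automorphism group of a dessin d'enfant.
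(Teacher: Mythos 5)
Your proposal is correct and follows essentially the same strategy as the paper: place a non-uniform dessin with a \emph{prime} number of edges on the quotient orbifold $X/G$ (with the cone points among its vertices), pull it back to $X$, and rule out extra automorphisms by playing the primality of the quotient degree against the non-uniformity of the pullback. The only difference is packaging — you pull the dessin back combinatorially and make the degree count explicit via the free action of ${\rm Aut}({\mathcal D})$ on edges, while the paper realizes the cover through Fuchsian groups and a quasiconformal deformation and argues the same dichotomy on the Belyi side — but the key lemma (your conditions (i)--(iii) versus Lemma~\ref{lema1}) and the exclusion mechanism are identical.
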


\begin{rema}\label{referee}
As noted to the author by the referee, if the Belyi pair $(S,\beta)$ (dessin d'enfant) of genus $g>0$ is uniform, then $(S,\eta \circ \beta)$, where $\eta(z)=16z(z-3/4)^{2}$, is a non-uniform one with ${\rm Aut}(S,\beta)={\rm Aut}(S, \eta \circ \beta)$.
This, in particular, asserts that every $2$-generated finite group can be realized as the full group of automorphisms of both a regular dessin d'enfant and of a non-uniform dessin d'enfant.
\end{rema}

As previously observed, every finite group $G$ can be realized as a group of conformal automorphisms of some closed Riemann surface of genus at least two. The minimal value $\mu(G)$ of such genera is called the {\it strong symmetric genus} of  $G$. In \cite{MZ}, May and Zimmermann observed that every $g \geq 2$ is the strong symmetric group of a suitable finite group. The value of $\mu(G)$ is known for some groups (${\rm PSL}_{2}(q)$, ${\rm SL}_{2}(q)$, sporadic simple groups, alternating and symmetric groups, etc).  Theorem \ref{main1-1} asserts that a minimal genus action of a finite group $G$ can be realized over a suitable dessin d'enfant.

\begin{coro}\label{main3}
If $G$ is a finite group, then there is a dessin d'enfant of genus $\mu(G)$  admitting it as its group of automorphisms.
\end{coro}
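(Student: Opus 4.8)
The plan is to obtain Corollary~\ref{main3} as an immediate consequence of Theorem~\ref{main1-1}, together with the dictionary between dessins d'enfants and Belyi pairs recalled in the introduction. No new construction is needed; the only point to add on top of Theorem~\ref{main1-1} is the remark that a dessin d'enfant of genus at least two whose automorphism group is $G$ cannot have genus smaller than $\mu(G)$.

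First I would unwind the definition of the strong symmetric genus: by its very definition there is a closed Riemann surface $S$ of genus $g=\mu(G)\geq 2$ on which $G$ acts as a group of conformal automorphisms. Applying Theorem~\ref{main1-1} to this action produces a dessin d'enfant ${\mathcal D}=(X,{\mathcal G})$ with $X$ of genus $\mu(G)$ and ${\rm Aut}({\mathcal D})\cong G$; here the topological equivalence part of the theorem is not even required. This already proves the existence assertion of the corollary.

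To justify the companion claim (made in the abstract) that $\mu(G)$ is the \emph{minimum} such genus, I would argue in the opposite direction. Let ${\mathcal D}'=(X',{\mathcal G}')$ be any dessin d'enfant of genus $h\geq 2$ with ${\rm Aut}({\mathcal D}')\cong G$. Passing to the associated Belyi pair $(S',\beta')$, the surface $S'$ is a closed Riemann surface of genus $h$, and under the categorical equivalence ${\rm Aut}({\mathcal D}')\cong{\rm Aut}(S',\beta')\leq{\rm Aut}(S')$, so $G$ acts as a group of conformal automorphisms of $S'$. Minimality of $\mu(G)$ forces $h\geq\mu(G)$; combined with the previous paragraph, $\mu(G)$ is exactly the least genus $\geq 2$ realizing $G$ as the automorphism group of a dessin d'enfant, which is the minimum genus action statement.

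I do not anticipate any real obstacle: all the content is carried by Theorem~\ref{main1-1}, and what remains is bookkeeping through the equivalence with Belyi pairs. The one point deserving a word of care is the convention on $\mu(G)$: it denotes here the minimal genus \emph{at least two} on which $G$ acts, so the hypothesis $g\geq 2$ of Theorem~\ref{main1-1} is automatically satisfied, and the genus $0$ and $1$ cases lie outside the present discussion.
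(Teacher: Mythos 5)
Your proposal is correct and matches the paper's (implicit) argument: the corollary is stated as an immediate consequence of Theorem~\ref{main1-1} applied to a minimal-genus conformal action of $G$, exactly as you do, and the paper offers no further proof. Your additional paragraph on minimality correctly records the converse direction asserted in the abstract, using the paper's convention that $\mu(G)$ is taken among genera at least two.
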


In the above, we have considered finite groups acting conformally on closed Riemann surfaces. In \cite{Wink}, Winkelmann proved that every countable group can be realized as the full group of conformal automorphisms of some Riemann surface.
A subtle adaption of the arguments in the proof of Theorem \ref{main1} permits to observe the following for the finitely generated groups.

\begin{theo}\label{main2}
Every finitely generated  group can be realized as the full group of conformal automorphisms of a non-constant meromorphic map $\beta:S \to \widehat{\mathbb C}$ branched over at most three values, where $S$ is some (non-compact if $G$ is not finite) Riemann surface. 
\end{theo}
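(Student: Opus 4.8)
\noindent\textbf{Proof strategy for Theorem \ref{main2}.}
The plan is to reduce the statement to a purely group-theoretic assertion about the free group of rank two, using that a dessin d'enfant always displays its automorphism group as the full deck group of the associated covering. Identify the triangle group $\Delta(\infty,\infty,\infty)$ with the free group $F_{2}=\langle x,y\rangle$, so that $\mathbb{H}/F_{2}$ is the thrice-punctured sphere $\widehat{\mathbb{C}}\setminus\{0,1,\infty\}$. A subgroup $\Lambda\leq F_{2}$ yields a Riemann surface $\mathbb{H}/\Lambda$ covering $\widehat{\mathbb{C}}\setminus\{0,1,\infty\}$; filling in the points above $0,1,\infty$ produces a Riemann surface $S$ and a non-constant meromorphic map $\beta\colon S\to\widehat{\mathbb{C}}$ branched over $\{0,1,\infty\}$, with $S$ compact exactly when $[F_{2}:\Lambda]<\infty$. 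Since an automorphism of $S$ fixing $\beta$ restricts, by definition, to a deck transformation of $\mathbb{H}/\Lambda\to\widehat{\mathbb{C}}\setminus\{0,1,\infty\}$, and conversely such deck transformations extend to $S$, one has
\[
{\rm Aut}(S,\beta)=N_{F_{2}}(\Lambda)/\Lambda .
\]
So it suffices to produce, for a given finitely generated group $G$, a subgroup $\Lambda\leq F_{2}$ with $N_{F_{2}}(\Lambda)/\Lambda\cong G$; then $[F_{2}:\Lambda]\geq[N_{F_{2}}(\Lambda):\Lambda]=|G|$ is infinite whenever $G$ is, and $S$ comes out non-compact exactly as claimed.

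To produce $\Lambda$, I would first pass to a finitely generated ambient group. Every finitely generated group embeds, in a controlled way, into a $2$-generated finitely generated group; so I would fix such a $2$-generated group $K$ carrying the relevant copy of $G$, together with an epimorphism $\psi\colon F_{2}\to K$ with kernel $N$. For $\Lambda\leq F_{2}$ with $N\leq\Lambda$ one checks, using that $N$ is normal in $F_{2}$, that $\psi^{-1}\bigl(N_{K}(\psi(\Lambda))\bigr)=N_{F_{2}}(\Lambda)$, whence $N_{F_{2}}(\Lambda)/\Lambda\cong N_{K}(\psi(\Lambda))/\psi(\Lambda)$. Everything thus comes down to the group-theoretic core: \emph{every finitely generated group $G$ is isomorphic to $N_{K}(H)/H$ for some $2$-generated group $K$ and some $H\leq K$}, with $[K:H]=\infty$ when $G$ is infinite. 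For finite $G$ this is the content behind Jones' Theorem \ref{main0} (a Frucht-type realization inside a $2$-generated group); the work is to carry out the analogous construction in the finitely generated setting --- the ``subtle adaptation'' of the method of Theorem \ref{main1} --- keeping $K$ finitely generated, keeping the coset action of $K$ on $K/H$ transitive, and controlling $N_{K}(H)$ so that its quotient by $H$ is exactly $G$ and nothing larger.

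Given such a $\Lambda$, the first paragraph returns a Belyi pair $(S,\beta)$ with $\beta$ branched over (at most) the three values $0,1,\infty$, with $S$ non-compact precisely when $G$ is infinite, and with ${\rm Aut}(S,\beta)=N_{F_{2}}(\Lambda)/\Lambda\cong G$; for finite $G$ this recovers Theorem \ref{main0}. The same $\Lambda$ admits a geometric realization closer in spirit to Theorem \ref{main1}: one builds $S$ as a regular $G$-covering of a suitably rigid compact base dessin $(X_{0},\beta_{0})$, so that $\beta$, being the composition of $\beta_{0}$ with an unramified covering, is still branched only over $\{0,1,\infty\}$; but the combinatorial route above makes the identification of ${\rm Aut}(S,\beta)$ transparent.

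The hard part will be the group-theoretic core of the second paragraph: extending Jones' realization of a finite group as a section $N_{K}(H)/H$ of a $2$-generated group to an arbitrary finitely generated group, while (i) keeping $K$ finitely generated, (ii) arranging $[K:H]=\infty$ when $G$ is infinite, so that $S$ is non-compact, and (iii) pinning down $N_{K}(H)$ so that the normalizer quotient is exactly $G$. In terms of $F_{2}$ this is the manufacture of an \emph{infinite-index} subgroup $\Lambda$ with prescribed normalizer quotient --- a situation absent from the compact (finite-index) setting of Jones and of Theorem \ref{main1}, and one in which ${\rm Aut}(S)$ itself is typically infinite, so that the bound on the symmetry of $\beta$ must come from the combinatorics of $\Lambda$ rather than from finiteness of ${\rm Aut}(S)$.
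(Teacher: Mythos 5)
Your proposal reduces the theorem to a group-theoretic statement that you explicitly leave unproven, and that statement is the entire content of the theorem in your formulation. You want a subgroup $\Lambda\leq F_{2}$ with $N_{F_{2}}(\Lambda)/\Lambda\cong G$, equivalently a $2$-generated group $K$ and $H\leq K$ with $N_{K}(H)/H\cong G$; you correctly identify this as ``the hard part'' and then stop. For finite $G$ this is Jones' theorem, whose proof is itself nontrivial, and no argument is offered for extending it to infinite finitely generated $G$. So as it stands the proposal is a reformulation of the problem, not a proof. There is also a secondary gap in your first paragraph: for an infinite-index $\Lambda\leq F_{2}=\Delta(\infty,\infty,\infty)$, a cusp of ${\mathbb H}/\Lambda$ lying over $0$, $1$ or $\infty$ may have infinite local degree (this happens exactly when $\Lambda$ meets the corresponding parabolic conjugacy class only in the identity), and such a puncture cannot be filled in to yield a meromorphic map at that point; so ``filling in the points above $0,1,\infty$'' does not in general produce the claimed $\beta:S\to\widehat{\mathbb C}$ in the non-compact case.

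The paper's route avoids both difficulties and is worth contrasting with yours. It takes the genus-$s$ surface group $\Gamma=\langle A_{1},B_{1},\ldots,A_{s},B_{s}:\prod_{j}[A_{j},B_{j}]=1\rangle$ with ${\mathbb H}^{2}/\Gamma=R$ a closed surface, and the epimorphism $\theta:\Gamma\to G$, $A_{j}\mapsto g_{j}$, $B_{j}\mapsto 1$, where $g_{1},\ldots,g_{s}$ generate $G$. Since $\ker\theta$ is \emph{normal}, $S={\mathbb H}^{2}/\ker\theta\to R$ is a regular \emph{unbranched} covering with deck group exactly $G$ --- no normalizer computation is needed, and all branching of $\beta=\delta\circ\eta$ comes from a finite-degree Belyi map $\delta$ on the compact base $R$, so the three-branch-values and meromorphy claims are immediate even when $S$ is non-compact. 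The step you would still be missing even in your closing ``geometric realization'' remark is how the paper rules out extra automorphisms: it chooses $\delta$ of \emph{prime} degree and \emph{non-uniform} (Lemma 1), so that if ${\rm Aut}(S,\beta)$ were strictly larger than $G$, then $\delta$ would factor through $S/{\rm Aut}(S,\beta)$, primality would force $\beta$ to be a regular branched covering, and regularity would force uniformity --- contradicting non-uniformity of $(R,\delta)$ pulled back through the unbranched $\eta$. That rigidity mechanism, rather than any normalizer realization theorem, is what makes the construction close up; your proposal would need either to supply it or to prove the normalizer realization statement from scratch.
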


\subsection*{Acknowledgments}
The author is very grateful to Gareth Jones for discussions about his preprint \cite{Jones}, which motivated this work, and  to Ernesto Girondo and Gabino Gonz\'alez-Diez for the many discussions about dessins d'enfants. Also, the author would like to thank the referee for the valuable corrections/suggestions and by pointing out Remark \ref{referee}.

\section{Proof of theorem \ref{main1}}
Let us start with the following helpful observation.

\begin{lemm}\label{lema1}
For each pair of integers $\gamma,r \geq 0$, there is a non-uniform Belyi pair $(\widehat{R},\delta)$, where $\widehat{R}$ is closed Riemann surface of genus $\gamma$, $\delta$ has some prime degree and with $\delta^{-1}(1)$ of cardinality at least $r$.
\end{lemm}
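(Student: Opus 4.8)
The plan is to build $(\widehat R,\delta)$ explicitly by taking a prime-degree cyclic (or abelian) branched cover of the Riemann sphere and controlling the ramification data so that the genus comes out to $\gamma$, the degree is prime, and the fiber over $1$ is large. First I would fix a prime $p$ large enough compared to $\gamma$ and $r$ (for instance $p$ larger than $2\gamma+r+3$), and consider a map $\delta_{0}:\widehat{\mathbb C}\to\widehat{\mathbb C}$ of the form $z\mapsto z^{p}$ composed appropriately, or more flexibly, a degree-$p$ map with controlled branching. Actually the cleanest route is the standard Fermat-type construction: for suitable exponents $a_{i}$ with $\sum a_{i}\equiv 0 \pmod p$, the superelliptic curve $y^{p}=\prod_{i}(x-c_{i})^{a_{i}}$ together with the coordinate map $x$ gives a degree-$p$ cyclic cover of $\widehat{\mathbb C}$; by Riemann–Hurwitz, its genus depends in a controlled linear way on the number of branch points and the $a_{i}$, so I can hit any prescribed $\gamma$ by adjusting the number of branch points. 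The key point is that $x$ only ramifies over the $c_{i}$ (and possibly $\infty$), so I have complete freedom to place those branch values; I would place them all in $\{0,\infty\}$ together with one auxiliary point, then post-compose with a degree-one or low-degree rational map sending that branch set into $\{0,1,\infty\}$.

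The next step is to arrange that $\delta$ is a Belyi map (branch values in $\{0,1,\infty\}$) and, simultaneously, that $\delta^{-1}(1)$ has at least $r$ points. For the latter, I want $1$ to be an unbranched value of $\delta$, so that $\delta^{-1}(1)$ has exactly $\deg(\delta)=p$ points; since $p$ was chosen larger than $r$, this immediately gives the cardinality bound. So the construction should ensure all ramification of $\delta$ lies over $\{0,\infty\}$ and $1$ is a regular value. Concretely: build the cyclic cover ramified only over $0$ and $\infty$ in the $x$-line — but such a cover of the sphere branched over only two points is itself a sphere, which is too rigid to reach arbitrary genus. So instead I allow ramification over a third point as well, call it $\lambda\notin\{0,\infty\}$, reach genus $\gamma$ using Riemann–Hurwitz with three branch points, and then post-compose with a Möbius transformation carrying $\{0,\lambda,\infty\}$ to a set not containing $1$ — say to $\{0,\infty, t\}$ with $t\neq 1$ — followed by a further Belyi-type map $\widehat{\mathbb C}\to\widehat{\mathbb C}$ that sends $\{0,\infty,t\}$ into $\{0,1,\infty\}$ while keeping $1$ out of the new branch locus. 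A careful bookkeeping of which auxiliary polynomial map to use here is the one genuinely fiddly ingredient.

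Finally I must ensure the pair is non-uniform, i.e. not all black vertices, or not all white vertices, or not all faces, have equal degree. This is easy to force: since I have freedom in the ramification indices over $0$ and over $\infty$ (the $a_{i}\bmod p$ need not all be equal), I simply choose the branching over $0$ to be non-homogeneous — e.g. one point of $\delta^{-1}(0)$ totally ramified and another unramified — which makes the black vertices have differing degrees. I would also note that the genus-$0$ case ($\gamma=0$) is handled trivially by a direct small example (e.g. $\delta(z)=$ a suitable low-degree rational map on $\widehat{\mathbb C}$ with prime degree, non-uniform, and $1$ a regular value), and the case $r=0$ imposes no constraint. The main obstacle I anticipate is not any single step but the simultaneous satisfaction of all four demands — prime degree, prescribed genus, $1$ a regular value, and non-uniformity — without the parameters colliding; the resolution is to decouple them by choosing $p$ very large first, which gives enough room in the Riemann–Hurwitz count and enough unused points in the fiber over $1$ that the remaining conditions can be met independently.
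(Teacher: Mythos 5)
There is a genuine gap, and it sits at the heart of your construction: you cannot make $1$ a regular value of a Belyi map of genus $\gamma\geq 1$. A connected cover of $\widehat{\mathbb C}$ branched over at most two points has genus $0$ (its monodromy is cyclic, generated by a single permutation and its inverse; equivalently, Riemann--Hurwitz with branching over two points forces $2g-2\leq -2$). So any Belyi map on a surface of positive genus is necessarily branched over all three of $0,1,\infty$, and in particular $|\delta^{-1}(1)|<\deg\delta$. Your plan ``all ramification lies over $\{0,\infty\}$ and $1$ is a regular value, hence $|\delta^{-1}(1)|=p\geq r$'' is therefore unachievable for $\gamma\geq 1$. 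The attempted repair --- keep three branch values $\{0,\lambda,\infty\}$ and post-compose so that $1$ stays out of the branch locus --- does not escape this: a M\"obius map carrying the three branch values into $\{0,1,\infty\}$ must put one of them at $1$, and any non-M\"obius rational map used instead has degree $d\geq 2$, so the composite has degree $pd$ and you lose primality. There are further problems specific to the cyclic model $y^{p}=\prod_i(x-c_i)^{a_i}$: since $p$ is prime, each fiber over a branch value is homogeneous (either one totally ramified point or $p$ unramified ones --- the deck group $\mathbb{Z}/p$ acts transitively on fibers), so your non-uniformity device ``one point of $\delta^{-1}(0)$ totally ramified and another unramified'' is impossible, and with exactly three branch values the genus is pinned at $(p-1)/2$ rather than being freely prescribable.

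The fix is to abandon both the Galois structure and the demand that $1$ be unramified: you only need $|\delta^{-1}(1)|\geq r$, not $=p$. The paper does this combinatorially. Triangulate a genus-$\gamma$ surface with at least $r$ edges and pass to the associated \emph{clean} dessin (black vertices are the triangulation's vertices, white vertices the midpoints of its edges, so every white vertex has degree $2$); this already gives $|\delta^{-1}(1)|\geq r$ with each white vertex only doubly ramified. Then attach extra edges at a single white vertex $v$, inside one incident face, until the total number of edges is prime. Now $v$ has degree at least $3$ while other white vertices have degree $2$, so the dessin is non-uniform, the degree is prime, and the genus is $\gamma$ by construction. If you want to salvage an analytic version of your argument, you would need a non-Galois prime-degree cover with an inhomogeneous ramification profile over $1$ of the shape $(2,2,\ldots,2,1,\ldots)$ or similar, built via Riemann's existence theorem from three permutations $\sigma_0\sigma_1\sigma_\infty=1$ generating a transitive subgroup of $S_p$ --- which is essentially the same combinatorics in different clothing.
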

\begin{proof}
Make a triangulation of a closed orientable surface $Y$ of genus $\gamma$ such that it contains at least $r$ edges. Such a triangulation defines a clean dessin d'enfant of some degree (black vertices are the vertices of the triangulation and white vertices are the ones in the middle of edges of the triangulation). Now, we add edges from a white vertex $v$, into the interior of one of the two faces containing $v$ in its boundary,  in order to get a prime number of edges in total. As there are white vertices of degree two and the white vertex $v$ will have degree at least three, the previous defines a non-uniform dessin d'enfant of genus $\gamma$ which is non-uniform. The Belyi pair induced by this dessin d'enfant is the required one.
\end{proof}

\subsection{Proof of theorem \ref{main1}}
Let $G$ be some finite group acting as a group of conformal automorphisms of a closed Riemann $S$ of genus $g \geq 2$. The quotient orbifold $S/G$ consists of a closed Riemann surface $R$ of genus $\gamma \geq 0$ with some finite number of cone points, $p_{1},\ldots, p_{r}$, such that, for $r>0$, the cone point $p_{j}$ has cone order $m_{j} \geq 2$, and these numbers satisfy $2\gamma+r-2 > \sum_{j=1}^{r}m_{j}^{-1}$. 
As a consequence of the uniformization theorem, there are a Fuchsian group $K$ acting on the hyperbolic plane ${\mathbb H}^{2}$ with presentation
$$K=\langle A_{1},B_{1},\ldots, A_{\gamma},B_{\gamma}, C_{1},\ldots, C_{r}: \prod_{j=1}^{\gamma}[A_{j},B_{j}] \prod_{k=1}^{r} C_{k}=1=C_{1}^{m_{1}}=\cdots=C_{r}^{m_{r}}\rangle,$$
where $[A_{j},B_{j}]=A_{j}B_{j}A_{j}^{-1}B_{j}^{-1}$,
and a surjective homomorphism $\theta:K \to G$, whose kernel $\Gamma_{\theta}$ is torsion-free, such that $S={\mathbb H}^{2}/\Gamma_{\theta}$, $S/G={\mathbb H}^{2}/K$ and the regular branched cover $S \to S/G$ is induced by the inclusion $\Gamma_{\theta} \leq K$.

Let us choose a Belyi pair $(\widehat{R},\delta)$, where $\widehat{R}$ has genus $\gamma$, $\delta^{-1}(1)$ has cardinality at least $r$  and $\delta$ has prime degree (as in Lemma \ref{lema1}). Let us make a choice of points $q_{1},\ldots,q_{r} \in \delta^{-1}(1)$. As a consequence of quasiconformal deformation theory \cite{Nag}, we may find a Fuchsian group $\widehat{K}$ such that ${\mathbb H}^{2}/\widehat{K}$ is the orbifold whose underlying Riemann surrace is $\widehat{R}$ and whose cone points are $q_{1},\ldots,q_{r}$ such that $q_{j}$ has cone order $m_{j}$. The group $\widehat{K}$ is isomorphic to $K$ (as abstract groups). Then there is a quasiconformal homeomorphism $W:{\mathbb H}^{2} \to {\mathbb H}^{2}$ conjugating $K$ into $\widehat{K}$. In this case, $\widehat{S}={\mathbb H}^{2}/W \Gamma_{\theta} W^{-1}$ is a closed Riemann surface admitting the group $\widehat{G}=\widehat{K}/W \Gamma_{\theta} W^{-1} \cong G$ as a group of conformal automorphisms and such that the actions of $G$ on $S$ and that of $\widehat{G}$ on $\widehat{S}$ are topologically equivalent (in particular, $\widehat{S}/\widehat{G}$ is also topologically equivalent to $S/G$). 

We have obtained a Belyi pair $(\widehat{S},\beta=\delta \circ \eta)$, where $\eta:\widehat{S} \to \widehat{R}$ is a regular branched cover with deck group $\widehat{G} \cong G$.  In this case, the group $\widehat{G}$ is a (which might be proper) subgroup of ${\rm Aut}(\widehat{S},\beta)$ whose conformal action is topolologically equivalent to that of $G$. Next, we proceed to observe that $G \cong {\rm Aut}(\widehat{S},\beta)$. 

Let us assume $\widehat{G}$ is a proper subgroup of  ${\rm Aut}(\widehat{S},\beta=\delta \circ \eta)=\widetilde{G}$. It means that the branched cover $\delta:\widehat{R} \to \widehat{\mathbb C}$ factors through $\widehat{S}/\widetilde{G}$. As $\delta$ has prime degree and $\widehat{G} \neq \widetilde{G}$, it must be that $(\widehat{S},\beta)$ is a regular dessin d'enfant. But as $\eta$ is an unbranched cover and $(\widehat{R},\delta)$ is non-uniform, neither can be $(\widehat{S},\beta)$, a contradiction as a regular Belyi pairs or dessins d'enfants are uniform.

\section{Proof of Theorem \ref{main2}}
As $G$ is a finitely generated group, we may find a finite set $\{g_{1},\ldots,g_{s}\}$ of generators for it. As a consequence of Lemma \ref{lema1}, 
 there is a non-uniform Belyi pair $(R,\delta)$, where $R$ is closed Riemann surface of genus $s$ and $\delta$ has order some prime integer $p \geq 3$.  Let 
$\Gamma=\langle A_{1},B_{1},\ldots,A_{s},B_{s}: \prod_{j=1}^{s} [A_{j},B_{j}]=1\rangle$ be a Fuchsian group acting on the hyperbolic plane ${\mathbb H}^{2}$ so that ${\mathbb H}^{2}/\Gamma=R$, and  consider the surjective homomorphism
$\theta:\Gamma \to G: A_{j} \mapsto g_{j}, B_{j} \mapsto 1.$
If $\Gamma_{\theta}$ is the kernel of $\theta$, then $S={\mathbb H}^{2}/\Gamma_{\theta}$ is a Riemann surface (which happens to be compact of genus $|G|(s-1)+1$ if $G$ is finite). The inclusion $\Gamma_{\theta} \leq \Gamma$ induces a regular unbranched covering $\eta:S \to R$, with deck group $G$. The pair $(S,\beta=\delta \circ \eta)$ has $G$ as a subgroup of its group of automorphisms (this is a Belyi pair if $G$ is finite). Next, we may proceed similarly as in the previous case to check that $G$ is the full group of automorphisms of the pair $(S,\beta)$. That is, if $G$ is not the full group $\widehat{G}$ of automorphisms of $(S,\beta)$, then the branched cover $\delta:R \to \widehat{\mathbb C}$ must factors through $S/\widehat{G}$. As $\delta$ has prime degree and $G \neq \widehat{G}$, it must be that $\beta$ is a regular branched cover with deck group $\widehat{G}$. But as $\eta$ is an unbranched cover and $(R,\delta)$ is non-uniform, neither can be $(S,\beta)$, a contradiction (since every regular branched covering is uniform).


\end{document}